\numberwithin{equation}{section}
\def\beq{\begin{eqnarray}}
\def\eeq{\end{eqnarray}}
\def\beqs{\begin{eqnarray*}}
\def\eeqs{\end{eqnarray*}}
\def\mz{{\mathbb Z}}
\def\mc{{\mathbb C}}
\newfont{\df}{eufm10}
\def\mc{{\mathbb C}}
\def\id{\hbox{\rm id}}
\def\sla{\mathfrak{sl}_2}
\title[]
{Simple modules over quantum torus and quantum group}
\author[L. Xia]{Limeng Xia$^{\ast}$}
\address{XIA: Institute of Applied System Analysis, Jiangsu University, XueFu Road 301,
Zhenjiang 212013, Jiangsu,  PR China} \email{xialimeng@ujs.edu.cn}
\author[N. Hu]{Naihong Hu$^{\ast\ast}$}
\address{HU: Department of Mathematics,  Shanghai Key Laboratory of Pure Mathematics and Mathematical Practice,
East China Normal University, Minhang Campus, Dong Chuan Road 500,
Shanghai 200241, PR China} \email{nhhu@math.ecnu.edu.cn}
\thanks{$^{\ast}$ L.M. Xia, supported by the NNSFC (Grant No. 11871249) and the Natural Science Foundation of Jiangsu Province (No. BK20171294).}
\thanks{$^{\ast\ast}$ N.H. Hu, supported by the NNSFC (Grant No.  11771142).}
\date{}
\begin{document}
\maketitle

\newtheorem{theo}{Theorem}[section]
\newtheorem{theorem}[theo]{Theorem}
\newtheorem{defi}[theo]{Definition}
\newtheorem{lemma}[theo]{Lemma}
\newtheorem{coro}[theo]{Corollary}
\newtheorem{prop}[theo]{Proposition}
\newtheorem{remark}[theo]{Remark}

\begin{abstract}  In this paper, we classify all simple modules over the quantum torus $\mc_\nu[x^{\pm1},y^{\pm1}]$ and the quantum group $U_q(\sla)$ for generic case.
\end{abstract}

{\bf Keywords:}  quantum torus, quantum group, simple module

{\it  Mathematics Subject Classification (2000)}: 17B37

\smallskip\bigskip

\section{Introduction}

Quantum group $U_q(\sla)$ is the $q$-deformation of the universal
enveloping algebra $U(\sla)$ of the $3$-dimensional simple Lie
algebra $\sla$.  In some sense, $\sla$ and $U_q(\sla)$ perhaps are
the most fundamental objects in the theory of Lie algebras and
quantum groups. The classification of simple modules over $\sla$ or
$U_q(\sla)$ is a very important problem in their representation
theory.

Let ${\bf z}$ be the Casimir element of  $U(\sla)$. The quotient
algebra $U(\sla)/\langle{\bf z}-c\rangle$ is isomorphic to a
subalgebra of Weyl algerba $\mathfrak{A}$ for any $c\in\mc$. In
1981, Block completely classified all simple modules over the Weyl
algerba $\mathfrak{A}$ and the Lie algebra $\sla$ (see \cite{B}).

Let $Z(U_q(\sla))$ be the center of quantum group $U_q(\sla)$. If
$q$ is a root of unity, then $K^{n}, E^n, F^n\in Z(U_q(\sla))$ for some positive integer $n$ and the quotient $U_q(\sla)/\langle K^{n}-c_1, E^n-c_2, F^n-c_3\rangle$ is a finite-dimensional algebra for all $c_1\in\mc^*$ and $c_2, c_3\in\mc$. In this case,  all simple modules have been
determined (See \cite{CK}). However, it is
still open to classify all simple modules over quantum group
$U_q(\sla)$ for generic $q\in\mc^*$.

The quantum torus $\mc_\nu[x^{\pm1},y^{\pm1}]$ is the quantum
analogue of the Weyl algebra $\mathfrak{A}$, which arises as a
localization of some group algebra (see \cite{Br}) and plays an
important role in noncommutative geometry (see [M]). If $\nu$ is a
root of unity of order $n$, the center algebra of the quantum
torus is generated by $x^n, y^n$, and the quotient algebra
$\mc_\nu[x^{\pm1},y^{\pm1}]/\langle x^n-c_1, y^n-c_2\rangle$ is isomorphic to
$\mathfrak{gl}_n(\mc)$ for all $c_1, c_2\in\mc^*$. It is also an open problem to classify  all
simple modules over the quantum torus for generic $\nu\in\mc^*$.

Similar to the classical case,  let $Z_q$ be the Casimir element of
$U_q(\sla)$, then the quotient algebra $U_q(\sla)/\langle
Z_q-c\rangle$ is isomorphic to a subalgebra of
 $\mc_\nu[x^{\pm1},y^{\pm1}]$ for any $c\in\mc$.

In this paper, we classify all simple modules over the quantum torus $\mc_\nu[x^{\pm1},y^{\pm1}]$ and the quantum group $U_q(\sla)$ for generic $\nu=q^2$.

\section{Localizations}

In this section, we first recall some definitions and facts about localizations of noncommutative rings, then we list some known results about
simple modules over noncommutative rings and their localizations (see \cite{B} and \cite{BG}).

Let $R$ be a ring with $1$ and $S$ a multiplicative subset of $R$ containing $1$. We say that $S$ satisfies the left Ore condition if
\beqs Rs\cap Sa\not=\emptyset,\;\forall (s,a)\in S\times R.\eeqs

A localization of $R$ with respect to $S$ is a ring $B=S^{-1}R$
containing $R$ as a subring such that every $s\in S$ is invertible
and $B=\{s^{-1}a\mid s\in S, a\in R\}$. The localization $B=S^{-1}R$
exists if and only if $S$ has no zero divisor and satisfies the left
Ore condition.

%
\begin{lemma}[Lemma 2.4.2 of \cite{B}]\label{Soc}
Suppose that $B$ is a localization of $R$ with respect to $S$ and a
principal left ideal domain which is not a division ring, $M$ is a
simple $S$-torsion-free $R$-module, $\alpha\in R$ is irreducible in
$B$ and annihilates some nonzero element of $M$. Then $M\cong
Soc_RB/B\alpha$.
\end{lemma}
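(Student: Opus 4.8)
The plan is to pass to the localization $B=S^{-1}R$ and reduce the claim to a socle computation inside the simple $B$-module $S^{-1}M$, using $S$-torsion-freeness as the decisive tool. First I would record that $\alpha$ being irreducible in the principal left ideal domain $B$ forces $B\alpha$ to be a maximal left ideal: if $B\alpha\subseteq B\beta\subsetneq B$, then $\alpha\in B\beta$ gives $\alpha=\delta\beta$ with $\beta$ a non-unit, so irreducibility makes $\delta$ a unit and hence $B\beta=B\alpha$. Here I use that $B$ is a domain (no zero divisors) and that $\alpha$ is a genuine non-unit, which is meaningful precisely because $B$ is not a division ring. Consequently $B/B\alpha$ is a simple left $B$-module.

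Second, I would build the comparison module. Since $M$ is $S$-torsion-free, the canonical map $M\to S^{-1}M=B\otimes_R M$, $m\mapsto 1^{-1}m$, is injective, so I identify $M$ with an $R$-submodule of $S^{-1}M$. A short argument shows $S^{-1}M$ is a \emph{simple} $B$-module: if $N$ is a nonzero $B$-submodule, pick $0\ne x\in N$ and write $x=s^{-1}m'$ with $s\in S$, $m'\in M$; then $0\ne m'=sx\in N\cap M$, so $N\cap M=M$ by simplicity of $M$, whence $N\supseteq BM=S^{-1}M$.

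Third, I would use the annihilated element to link the two simple $B$-modules. Choosing $0\ne m\in M$ with $\alpha m=0$, consider the $B$-linear map $B\to S^{-1}M$, $b\mapsto bm$. Its kernel is a left ideal containing $B\alpha$, and it is proper since $1\mapsto m\ne 0$; by maximality the kernel equals $B\alpha$. The image $Bm$ is a nonzero $B$-submodule of the simple module $S^{-1}M$, hence all of it. This yields a $B$-module isomorphism $B/B\alpha\cong S^{-1}M$, which in particular is an $R$-module isomorphism and therefore identifies the $R$-socles.

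Finally, the crux is to show $\mathrm{Soc}_R(S^{-1}M)=M$. Clearly $M$, being simple, lies in the socle. For the reverse inclusion I claim $M$ is the \emph{only} simple $R$-submodule: if $M'$ is another one with $M'\cap M=0$, take $0\ne m'\in M'$ and write $m'=s^{-1}m_0$; then $sm'=m_0\in M'\cap M=0$, and $S$-torsion-freeness of $S^{-1}M$ forces $m'=0$, a contradiction. Hence $M'\cap M\ne 0$, so $M'=M$ by simplicity of both, the socle is exactly $M$, and transporting along the isomorphism above gives $M\cong \mathrm{Soc}_R(B/B\alpha)$. I expect this last step — pinning the socle down to a single copy of $M$ rather than a larger semisimple module — to be the main obstacle, and $S$-torsion-freeness is exactly the hypothesis that makes it work.
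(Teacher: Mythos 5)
Your proof is correct. Note that the paper offers no argument of its own here: the lemma is imported verbatim as Lemma 2.4.2 of Block \cite{B}, so there is no internal proof to compare against. What you have written is essentially a self-contained reconstruction of the relevant half of Block's localization correspondence between simple $S$-torsion-free $R$-modules and simple $B$-modules ($M\mapsto S^{-1}M$, $N\mapsto \mathrm{Soc}_R N$): irreducibility in the principal left ideal domain $B$ makes $B\alpha$ maximal, the annihilated vector gives $B/B\alpha\cong S^{-1}M$, and $S$-torsion-freeness pins $\mathrm{Soc}_R(S^{-1}M)$ down to the single copy $M$ because every nonzero $B$-submodule, indeed every nonzero element, of $S^{-1}M$ meets $M$ after clearing a denominator. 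The only steps you take on faith are standard facts about Ore localization --- that elements of $S^{-1}M$ have the one-sided form $s^{-1}m$ and that the kernel of $M\to S^{-1}M$ is exactly the $S$-torsion submodule --- both of which are legitimate to quote since the existence of $B=S^{-1}R$ already presupposes the left Ore condition.
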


\begin{remark}Suppose that  $\alpha\in R$ is irreducible in $B$, then $B/B\alpha$ is a simple $B$-module. However, the $R$-module $Soc_RB/B\alpha=(R+B\alpha)/B\alpha$ may be not simple. The Lemma 5.2  gives  examples for this case.
\end{remark}

\section{Quantum torus $\mc_\nu[x^{\pm1},y^{\pm1}]$}

Let $\nu\in\mc^*$ be generic and $R=\mc_\nu[x^{\pm1},y^{\pm1}]$ with
the defining relation $\nu xy=yx$. In this section, we classify all
simple $R$-modules.

Let $S=\mc[x, x^{-1}]\setminus\{0\}$, which is multiplicative, contains $1$ and has no zero divisor.
\begin{lemma}\label{Ore}
The subset $S$ satisfies the left Ore condition.
\end{lemma}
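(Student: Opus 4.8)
The plan is to verify the left Ore condition directly from the definition: for every $(s,a)\in S\times R$, I must produce an element of $Rs\cap Sa$, i.e. find $r\in R$ and $t\in S$ with $rs=ta$. Since $S=\mc[x,x^{-1}]\setminus\{0\}$ consists of Laurent polynomials in $x$ alone, the essential feature I would exploit is that conjugation by $x$ (and its powers) acts on $y$ by a scalar coming from the relation $\nu xy=yx$, so that $S$ is stable, up to rescaling, under the twist that moving an $x$-polynomial past a monomial in $y$ induces.

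First I would reduce to the case where $a$ is a monomial. Writing $a=\sum_{i} f_i(x)\,y^{i}$ as a finite sum with $f_i\in\mc[x,x^{-1}]$, it suffices by $\mc$-linearity to handle each term $f_i(x)y^i$ and then clear denominators over a common element of $S$; concretely, if for each $i$ I can find $s_i\in S$ and $r_i\in R$ with $r_i s = s_i\,f_i(x)y^i$, I combine them using that $S$ is commutative (it lives in the central-free subalgebra $\mc[x^{\pm1}]$) to a single common multiple. So the heart of the matter is a single monomial $a=f(x)y^{i}$.

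For such a monomial, I would use the commutation rule in the explicit form $y^{i}x^{k}=\nu^{ik}x^{k}y^{i}$, which follows by induction from $\nu xy=yx$. The key computation is that for any $s=s(x)\in S$ one has $y^{i}s(x)=s(\nu^{i}x)\,y^{i}$, where $s(\nu^i x)$ denotes the Laurent polynomial obtained by the substitution $x\mapsto \nu^i x$; since $\nu$ is generic (in particular nonzero), $s(\nu^i x)$ is again a nonzero element of $S$. Thus starting from $s(x)$ on the right, I can write $f(x)y^{i}s(x)=f(x)s(\nu^i x)y^{i}$ and, comparing with $s'\!(x)\cdot f(x)y^{i}$, I see that choosing $s'=s(\nu^i x)\in S$ gives the identity $f(x)\,y^{i}\,s(x) \;=\; s(\nu^{i}x)\,f(x)\,y^{i}$ up to placing $f$ consistently. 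Rearranging, the element $r=f(x)y^i$ and the element $t=s(\nu^i x)\in S$ satisfy $r\,s = t\,a$, which is precisely a common element of $Rs\cap Sa$.

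The main obstacle, such as it is, is purely bookkeeping rather than conceptual: one must be careful that the substitution $x\mapsto \nu^i x$ really sends $S$ into $S$ (it does, because it is an automorphism of $\mc[x^{\pm1}]$ whenever $\nu\neq0$) and that, after reducing a general $a$ to a finite sum of monomials $f_i(x)y^i$ with differing exponents $i$, the several denominators $s(\nu^i x)$ can be reconciled into one common element of $S$. Since $S$ is a commutative domain (it is $\mc[x^{\pm1}]$ minus zero), any finite set of its elements has a common multiple inside $S$, so I would multiply through by the product of the required $s(\nu^i x)$ to obtain a single $t\in S$ and a single $r\in R$ with $rs=ta$, completing the verification of the left Ore condition.
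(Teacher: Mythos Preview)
Your argument is correct and follows essentially the same route as the paper's proof: both hinge on the commutation identity $y^{i}s(x)=s(\nu^{i}x)\,y^{i}$ and then clear the finitely many twists $s(\nu^{i}x)$ by taking their product in $S$. The paper simply writes down the resulting formula in one stroke, setting $h(x)=\prod_{f_i\neq 0}s(\nu^{i}x)$ and $b_i(x)=h(x)f_i(x)/s(\nu^{i}x)$ so that $\bigl(\sum_i b_i(x)y^{i}\bigr)s(x)=h(x)\,a$, which is precisely the common multiple you describe in your final paragraph.
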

\begin{proof}
If $a=0$, we have $0\in Sa\cap Rs$ for all $s\in S$.

For all $s=s(x)\in\mc[x, x^{-1}]$ and $a=\sum f_i(x)y^i\in R\setminus\{0\}$, choose
\beqs h(x)=\prod_{f_i\not=0}s(\nu^ix),\;b_i(x)=\frac{h(x)f_i(x)}{s(\nu^{i}x)}\in\mc[x, x^{-1}].\eeqs
Then we have
\beqs R\cdot s(x)\ni(\sum b_i(x)y^i)s(x)=\sum h(x)f_i(x)y^i=h(x)a\in S\cdot a. \eeqs
So $S$ satisfies the left Ore condition.
\end{proof}

The localization of $R$ with respect to $S$ is $B=S^{-1}R=\mc(x)[y,
y^{-1}]$. In particular, $yf(x)=f(\nu x)y$, for all $f(x)\in\mc(x)$.
\begin{lemma}
The ring $B=\mc(x)[y, y^{-1}]$ is a principal left ideal domain and is not a division ring.
\end{lemma}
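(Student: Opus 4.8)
The plan is to prove that $B = \mc(x)[y, y^{-1}]$ is a principal left ideal domain that is not a division ring.

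Let me think about the structure here. We have $B = \mc(x)[y, y^{-1}]$ where $y f(x) = f(\nu x) y$ for $f(x) \in \mc(x)$. This is a skew Laurent polynomial ring over the field $\mc(x)$.

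First, why is it a domain? The ring $B$ consists of elements $\sum_{i} f_i(x) y^i$ (finite sums, $i \in \mz$). The coefficient field $\mc(x)$ is a domain, and the twist is by an automorphism $\sigma: f(x) \mapsto f(\nu x)$ of $\mc(x)$. A skew Laurent polynomial ring over a domain with an automorphism is a domain — the leading terms multiply without cancellation because $\sigma$ is an automorphism (injective), so the product of the top-degree terms $f_m y^m \cdot g_n y^n = f_m \sigma^m(g_n) y^{m+n}$ is nonzero when $f_m, g_n \neq 0$.

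Why is it a principal left ideal domain? This is the key structural fact. The ring $\mc(x)[y; \sigma]$ (skew polynomial ring, before inverting $y$) is a left (and right) Euclidean domain with respect to degree in $y$: given $a, b$ with $\deg_y a \geq \deg_y b$ and $b \neq 0$, one can subtract a suitable left multiple of $b$ to lower the degree, because the leading coefficient of $b$ (an element of the field $\mc(x)$, twisted by a power of $\sigma$) is invertible. A Euclidean domain is a principal ideal domain. Inverting the central-ish element $y$ (note $y$ is normal: $y B = B y$ since $\sigma$ is an automorphism) to pass from $\mc(x)[y;\sigma]$ to the Laurent version $B$ preserves the principal-left-ideal property: any left ideal of $B$ contracts to a left ideal of $\mc(x)[y;\sigma]$, which is principal, and its generator remains a generator after localization.

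Why is it not a division ring? Because $y - 1$ (or any $y - \lambda$ with $\lambda \in \mc^*$) is a nonzero, noninvertible element: its inverse would have to be a Laurent polynomial in $y$, but multiplying $(y-1)\sum f_i y^i = 1$ and comparing $y$-degrees forces a contradiction — an element of positive $y$-degree times a nonzero element cannot equal the degree-zero element $1$ unless both are degree zero, yet $y - 1$ has degree one. Alternatively, the existence of the irreducible element $\alpha$ used in Lemma~\ref{Soc} already presupposes $B$ is not a division ring, so exhibiting one such noninvertible element suffices.

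The main obstacle is getting the left Euclidean algorithm stated cleanly with the correct placement of the twist $\sigma$. Because $y$ commutes past $\mc(x)$ only up to the automorphism, one must be careful that left division (subtracting $c(x) y^k \cdot b$ on the left) correctly cancels the leading term; this is where the invertibility of leading coefficients in the field $\mc(x)$ and the bijectivity of $\sigma$ are both essential. Once the degree function and left division are set up correctly, the descent to principality is routine, and the non-division-ring claim is immediate from a single explicit element.
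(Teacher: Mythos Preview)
Your proposal is correct and follows essentially the same approach as the paper: the paper's proof simply asserts that the invertible elements of $B$ are exactly the nonzero monomials in $y$ (hence $B$ is not a division ring) and that $B$ is Euclidean (hence a principal left ideal domain). Your argument supplies the details behind both assertions, with the only minor difference being that you establish the Euclidean property for the skew polynomial ring $\mc(x)[y;\sigma]$ first and then pass to the Laurent localization, whereas the paper asserts Euclideanity for $B$ directly; these are equivalent viewpoints.
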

\begin{proof}
In fact, $\alpha\in B$ is invertible if and only if it is a non-zero monomial in variable $y$. So $B$ is not a division ring.
Moreover, $B$ is Euclidean, and then it is a principal left ideal domain.
\end{proof}

\begin{lemma}
For any $\lambda\in\mc^*$, let $M_\lambda$ be the vector space $\mc[y, y^{-1}]{\bf 1}_\lambda$. Then $M_\lambda$ is a
simple module over $R$ with $x\cdot{\bf1}_\lambda=\lambda{\bf1}_\lambda$ and $y$ acts by multipling.
\end{lemma}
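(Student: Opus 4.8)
The plan is to first make the $R$-action explicit, then verify it is well defined, and finally establish simplicity through a weight-space argument.

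First I would work out how $x$ must act. The defining relation $\nu xy=yx$ gives $xy=\nu^{-1}yx$, and hence $xy^n=\nu^{-n}y^nx$ for all $n\in\mz$. Combining this with $x\cdot{\bf 1}_\lambda=\lambda{\bf 1}_\lambda$ and the prescription that $y$ act by left multiplication forces
\beqs x\cdot(y^n{\bf 1}_\lambda)=\nu^{-n}y^nx\cdot{\bf 1}_\lambda=\lambda\nu^{-n}\,y^n{\bf 1}_\lambda. \eeqs
To see this really defines an $R$-module I would check the single relation $\nu xy=yx$ on each basis vector $y^n{\bf 1}_\lambda$ (a one-line computation), and observe that $x$ acts by the nonzero scalars $\lambda\nu^{-n}$ (nonzero since $\lambda,\nu\in\mc^*$) while $y$ acts as the invertible shift; thus both $x$ and $y$ act invertibly, consistent with $x^{\pm1},y^{\pm1}\in R$.

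For simplicity the key point is that $x$ acts diagonally on the basis $\{y^n{\bf 1}_\lambda\mid n\in\mz\}$ with eigenvalues $\lambda\nu^{-n}$, and since $\nu$ is generic (not a root of unity) these eigenvalues are pairwise distinct. Hence each $y^n{\bf 1}_\lambda$ spans a one-dimensional $x$-eigenspace. Given a nonzero submodule $N$ and a nonzero element $v=\sum_{n\in F}c_ny^n{\bf 1}_\lambda\in N$ with $F$ finite and every $c_n\neq0$, I would use Lagrange interpolation to choose a polynomial $p$ with $p(\lambda\nu^{-n_0})=1$ and $p(\lambda\nu^{-n})=0$ for the remaining $n\in F$; then $p(x)\cdot v=c_{n_0}y^{n_0}{\bf 1}_\lambda\in N$, so $y^{n_0}{\bf 1}_\lambda\in N$.

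Finally, applying the invertible operators $y^{\,m-n_0}$ shows $y^m{\bf 1}_\lambda\in N$ for every $m\in\mz$, so $N=M_\lambda$ and $M_\lambda$ is simple. The one place where genericity of $\nu$ is indispensable — and the main thing to watch — is the pairwise distinctness of the eigenvalues $\lambda\nu^{-n}$; without it the interpolation step collapses and simplicity can fail, consistent with the root-of-unity picture recalled in the introduction, where $R$ admits large finite-dimensional quotients.
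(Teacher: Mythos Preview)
Your proof is correct and follows essentially the same approach as the paper: both exploit that $x$ acts on $y^i{\bf 1}_\lambda$ by the pairwise distinct scalars $\lambda\nu^{-i}$ (since $\nu$ is generic), and use polynomials in $x$ to isolate a single monomial term from an arbitrary nonzero element, then shift by powers of $y$. The only cosmetic difference is that the paper phrases the isolation step via the invertibility of the Vandermonde matrix $(\nu^{-in})$, whereas you phrase it via Lagrange interpolation; these are equivalent.
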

\begin{proof}
For all $v=(\sum_{i=k}^l c_iy^i){\bf1}_\lambda\not=0$, we have
\beqs \lambda^{-n}x^n\cdot v=(\sum_{i=k}^l \nu^{-in}c_iy^i){\bf1}_\lambda.\eeqs
Since $\nu$ is generic, the matrix $(q^{-in})_{k\leq i\leq l, 0\leq n\leq l-k}$ is invertible, this forces $c_iy^i{\bf1}_\lambda\in M$.
Because $v\not=0$, we have $c_iy^i{\bf1}_\lambda\not=0$ for some $i$ and ${\bf1}_\lambda=\frac{1}{c_i}y^{-i}(c_iy^i{\bf1}_\lambda)\in M$. So $M$ is simple.
\end{proof}

\begin{theo}
Let $M$ be a simple module over $R$. Then one of the following
holds:

(i) There exists a $\lambda\in\mc^*$ such that $M\cong M_\lambda$;

(ii) There exists an $\alpha\in R$ such that $\alpha$ is irreducible
in $B$ and $M\cong (R+B\alpha)/B\alpha$.

\end{theo}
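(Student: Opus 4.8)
The plan is to dichotomize according to the behaviour of $M$ under the Ore set $S$. First I would check that $t_S(M)=\{m\in M\mid sm=0\text{ for some }s\in S\}$ is an $R$-submodule: closure under sums and under the left action of $R$ both reduce to the left Ore condition of Lemma~\ref{Ore} (for instance, given $sm=0$ and $r\in R$, pick $b\in R$ and $t\in S$ with $bs=tr$; then $t(rm)=b(sm)=0$, so $rm\in t_S(M)$). Since $M$ is simple, either $t_S(M)=M$ or $t_S(M)=0$, and I claim these two alternatives produce case (i) and case (ii) respectively.

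Suppose first that $M$ is $S$-torsion. Choose $0\ne v\in M$ and $s\in S$ with $sv=0$. Writing $s$ as a monomial unit times $\prod_i(x-a_i)^{m_i}$ with every $a_i\in\mc^*$ (the factor $x$ is invertible, hence contributes no root), one sees that $\mc[x,x^{-1}]v$ is finite-dimensional and that $x$ acts on it invertibly. Therefore $x$ admits an eigenvector $w\ne0$ with eigenvalue $\lambda\in\mc^*$. From $x^ky^j=\nu^{-kj}y^jx^k$ one gets $Rw=\span_{\mc}\{y^jw\mid j\in\mz\}$, so $y^j{\bf 1}_\lambda\mapsto y^jw$ defines a surjective $R$-homomorphism $M_\lambda\to M$. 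As $M_\lambda$ is simple (by the preceding lemma) and $M\ne 0$, this homomorphism is an isomorphism, giving (i).

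Now suppose $M$ is $S$-torsion-free, so the localization map embeds $M$ into $\overline M:=S^{-1}M$, a $B$-module. Fix $0\ne m\in M$; then $Rm=M$ by simplicity, whence $\overline M=S^{-1}(Rm)=Bm$ is cyclic over $B$. I would then show $\overline M$ is a \emph{simple} $B$-module: any nonzero $B$-submodule $N$ contains some $s^{-1}m'$ with $s\in S$ and $0\ne m'\in M$, hence contains $m'=s\cdot(s^{-1}m')\in N\cap M$, forcing $N\cap M=M$ and $N=\overline M$. Thus $\overline M\cong B/B\beta$ where $B\beta=\mathrm{Ann}_B(m)$ is a maximal left ideal, and since $B$ is a principal left ideal domain, $\beta$ is irreducible in $B$. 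Clearing denominators, I choose $d(x)\in S$ with $\alpha:=d(x)\beta\in R$; because $d(x)$ is a unit of $B$ we have $B\alpha=B\beta$, so $\alpha$ is still irreducible in $B$, and $\alpha m=d(x)\beta m=0$ exhibits a nonzero element of $M$ annihilated by $\alpha$. Lemma~\ref{Soc} now applies and yields $M\cong Soc_R(B/B\alpha)=(R+B\alpha)/B\alpha$, which is (ii).

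I expect the $S$-torsion-free case to be the crux. The substantive point is that $\overline M=S^{-1}M$ is simultaneously cyclic and simple over the principal left ideal domain $B$, so that its annihilator is cut out by a single irreducible element $\beta$; once this is in hand, the descent from $\beta\in B$ to $\alpha\in R$ is harmless precisely because the cleared denominator $d(x)$ is a unit of $B$ and leaves the left ideal $B\beta$ unchanged, whereupon Lemma~\ref{Soc} carries the remainder of the argument.
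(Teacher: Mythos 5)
Your proof is correct and follows essentially the same route as the paper: the dichotomy on $S$-torsion, with $M_\lambda$ arising from an $x$-eigenvector in the torsion case and Lemma~\ref{Soc} applied in the torsion-free case. You supply more detail than the paper does (notably the construction of the irreducible $\alpha\in R$ by clearing denominators from a generator of $\mathrm{Ann}_B(m)$ in $S^{-1}M$), and all of those details check out.
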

\begin{proof}
If $M$ is $S$-torsion, then there exists $\lambda\in\mc^*$ and $v\in M$ such that $xv=\lambda v$ and $M=\mc[y, y^{-1}]v$, which is isomorphic to $M_\lambda$.

Now assume that $M$ is $S$-torsion-free. By  Lemma \ref{Soc}, there
exists $\alpha\in R$ such that $\alpha$ is irreducible in $B$ and
\beqs M\cong Soc_RB/B\alpha=(R+B\alpha)/B\alpha.\eeqs
\end{proof}


\section{Quantum group $U_q(\sla)$}

Let $q\in\mc^*$ be generic. The  quantum group $U_q(\sla)$ is the
complex unital algebra generated by elements $E, F, K, K^{-1}$ with
relations
\beqs&& KK^{-1}=1, \,KEK^{-1}=q^2E, \,KFK^{-1}=q^{-2}F,\\
&&[E, F]=\frac{K-K^{-1}}{q-q^{-1}}.\eeqs It is well known that the
center of $U_q(\sla)$ is the polynomial algebra $\mc[Z_q]$ in
variable (see \cite{Jan}) \beqs
Z_q=EF+\frac{1}{(q-q^{-1})^2}(q^{-1}K+qK^{-1}).\eeqs

\def\c{{\bf c}}
For any simple $U_q(\sla)$-module $V$, there exists a $\c\in\mc$
such that $Z_q$ acts as $\c\cdot\id_V$. Thus the classification
problem of simple $U_q(\sla)$-modules is equivalent to the
classification problem of simple modules over
$U_{\c}=U_q(\sla)/(Z_q-\c)U_q(\sla)$ for all $\c\in\mc$.

\begin{lemma}
The map $F\mapsto y, K^{\pm1}\mapsto x^{\pm1}, E\mapsto \c y^{-1}-\frac{1}{(q-q^{-1})^2}(q^{-1}K+qK^{-1})y^{-1}$
defines an algebra injection $\phi: U_{\c}\rightarrow R=\mc_\nu[x^{\pm1},y^{\pm1}]$ with $\nu=q^2$.
\end{lemma}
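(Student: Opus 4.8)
The plan is to prove the statement in two stages: first that the assignment $\phi$ respects the defining relations of $U_{\c}$, so that it is a well-defined algebra homomorphism; and then that $\phi$ is injective, by producing a spanning set of $U_{\c}$ whose image under $\phi$ is linearly independent in $R$.

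For the first stage I would abbreviate $P(x)=\c-\frac{1}{(q-q^{-1})^{2}}(q^{-1}x+qx^{-1})$, so that the proposed image of $E$ is $P(x)y^{-1}$, and work throughout with the commutation rule $yf(x)=f(\nu x)y$ recorded above (with $\nu=q^{2}$), together with its consequence $y^{-1}f(x)=f(\nu^{-1}x)y^{-1}$. The relations $KK^{-1}=1$ and $KFK^{-1}=q^{-2}F$ are immediate from $xx^{-1}=1$ and $xyx^{-1}=q^{-2}y$. For $KEK^{-1}=q^{2}E$ I would note that conjugation by $x$ fixes $x^{\pm1}$ and sends $y^{-1}\mapsto q^{2}y^{-1}$, so it multiplies $P(x)y^{-1}$ by $q^{2}$. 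The two remaining identities both reduce to the computations $\phi(E)\phi(F)=P(x)y^{-1}y=P(x)$ and $\phi(F)\phi(E)=y\,P(x)y^{-1}=P(\nu x)$: the central relation then follows since $P(x)+\frac{1}{(q-q^{-1})^{2}}(q^{-1}x+qx^{-1})=\c$, giving $\phi(Z_{q})=\c$, while $[\phi(E),\phi(F)]=P(x)-P(\nu x)=\frac{x-x^{-1}}{q-q^{-1}}$ recovers the last relation after simplifying the coefficients using $\nu=q^2$.

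For injectivity I would first recall the PBW basis $\{F^{a}K^{b}E^{c}:a,c\ge0,\ b\in\mz\}$ of $U_q(\sla)$ and reduce it in the quotient $U_{\c}$. Since $Z_q=\c$ forces $EF=\c-\frac{1}{(q-q^{-1})^{2}}(q^{-1}K+qK^{-1})$, and hence $FE=EF-\frac{K-K^{-1}}{q-q^{-1}}$ is also a Laurent polynomial in $K$, every product $F^{a}E^{c}$ with $a,c\ge1$ can be rewritten, by repeatedly replacing an inner $FE$ by a polynomial in $K$ and commuting $K$ past $E$, as a combination of monomials of strictly smaller total degree in $E,F$. Iterating, one sees that $\{K^{b}:b\in\mz\}\cup\{F^{a}K^{b}:a\ge1,\ b\in\mz\}\cup\{E^{c}K^{b}:c\ge1,\ b\in\mz\}$ spans $U_{\c}$.

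Finally I would compute $\phi$ on this spanning set and separate by the $y$-grading of $R$ (in which $x^{m}y^{n}$ has degree $n$). Using $\phi(F^{a}K^{b})=y^{a}x^{b}$, $\phi(K^{b})=x^{b}$, and $\phi(E^{c}K^{b})=X^{c}x^{b}$ with $X^{c}=\big(\prod_{k=0}^{c-1}P(\nu^{-k}x)\big)y^{-c}$, these three families lie in $y$-degrees $\ge1$, $=0$, and $\le-1$ respectively, so a vanishing linear combination must vanish in each degree separately. In degree $n\ge1$ only the $\phi(F^{n}K^{b})$ survive and are distinct monomials; in degree $0$ the $x^{b}$ are distinct; and in degree $-n\le-1$ only the $\phi(E^{n}K^{b})$ survive, all sharing the common factor $Q_{n}(x)=\prod_{k=0}^{n-1}P(\nu^{-k}x)$, which is nonzero because $q$ is generic and $\mc[x^{\pm1}]$ is a domain, so their independence reduces to that of the monomials $x^{b}$. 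Hence $\phi$ carries a spanning set of $U_{\c}$ to a linearly independent subset of $R$, which forces $\ker\phi=0$. I expect the main obstacle to be the reduction step showing that the three families genuinely span $U_{\c}$: one must control the $E/F$ interplay in the quotient carefully (or invoke the known freeness of $U_q(\sla)$ as a module over its center $\mc[Z_q]$) to rule out any further collapse. The verification of the relations and the grading argument are then routine.
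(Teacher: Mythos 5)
Your proof is correct: the verification of the relations (including that $\phi(Z_q)=\c$, so the map factors through $U_{\c}$) and the injectivity argument via the spanning set $\{F^aK^b\}\cup\{K^b\}\cup\{K^bE^c\}$ and the $y$-grading of $R$ all check out, and the worry you flag about the spanning set is harmless since linear independence of its image already forces $\ker\phi=0$. The paper dismisses this lemma as ``Straightforward,'' so your argument is simply the intended direct verification carried out in full detail.
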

\begin{proof}Straightforward.
\end{proof}

Let $R_\c=\phi(U_\c)$ and $S=\mc[x, x^{-1}]\setminus\{0\}$ as in Section 3.
\begin{lemma}The localization of $R_\c$ with respect to $S$ exists and $S^{-1}R_\c=S^{-1}R$.
\end{lemma}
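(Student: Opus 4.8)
The plan is to reduce the statement to two things: (a) the left Ore condition for $S$ inside the subalgebra $R_\c$, which guarantees that $S^{-1}R_\c$ exists, and (b) the computation that this localization already fills up all of $B=S^{-1}R=\mc(x)[y,y^{-1}]$. First I would record the easy facts. Since $\phi(K^{\pm1})=x^{\pm1}$, we have $\mc[x,x^{-1}]\subseteq R_\c$, so $S=\mc[x,x^{-1}]\setminus\{0\}$ is a multiplicative subset of $R_\c$ containing $1$; as $R_\c\subseteq R$ and $R$ is a domain, $S$ has no zero divisors. It is also convenient to write $\phi(E)=g(x)y^{-1}$ with $g(x)=\c-\frac{q^{-1}x+qx^{-1}}{(q-q^{-1})^2}\in\mc[x,x^{-1}]$; since $q$ is generic the coefficient of $x$ in $g(x)$ is nonzero, so $g(x)\in S$.

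The key structural observation is that $R_\c$ is a graded subalgebra of $R$ for the $\mz$-grading by $y$-degree, $R=\bigoplus_{i\in\mz}\mc[x,x^{-1}]y^i$. Indeed all the generators $x^{\pm1}$, $\phi(F)=y$, $\phi(E)=g(x)y^{-1}$ are homogeneous (of $y$-degrees $0$, $1$, $-1$ respectively), so $R_\c=\bigoplus_{i\in\mz}W_iy^i$ with $W_i\subseteq\mc[x,x^{-1}]$, and $W_0=\mc[x,x^{-1}]$ because $x^{\pm1}\in R_\c$. Multiplying a homogeneous element $wy^i\in R_\c$ on the left by $f(x)\in W_0$ stays in $R_\c$, so each $W_i$ is a $\mc[x,x^{-1}]$-submodule of $\mc[x,x^{-1}]$, i.e.\ an ideal of $\mc[x,x^{-1}]$. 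This is exactly the feature that lets the Ore witness stay inside $R_\c$, and I expect checking it to be the main (though mild) obstacle, since one must be sure the negative-$y$-degree part of $R_\c$ is module-closed rather than merely a vector subspace.

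Next I would verify the left Ore condition for $S$ in $R_\c$ by reusing the explicit witness from Lemma \ref{Ore}. Given $s=s(x)\in S$ and $a=\sum_i f_i(x)y^i\in R_\c$ (so $f_i\in W_i$), put $h(x)=\prod_{f_i\neq0}s(\nu^ix)\in S$ and $b=\sum_i\frac{h(x)}{s(\nu^ix)}f_i(x)y^i$, which is precisely the element produced in the proof of Lemma \ref{Ore} and satisfies $bs=ha$ in $R$. For every $i$ with $f_i\neq0$ the factor $s(\nu^ix)$ divides $h(x)$, so $\frac{h(x)}{s(\nu^ix)}\in\mc[x,x^{-1}]$ and hence $\frac{h(x)}{s(\nu^ix)}f_i(x)\in\mc[x,x^{-1}]\cdot W_i\subseteq W_i$; thus $b\in R_\c$. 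Therefore $ha=bs\in R_\c s\cap Sa$, so the intersection is nonempty and $S$ satisfies the left Ore condition in $R_\c$. Consequently $S^{-1}R_\c$ exists and is naturally a subring of $B=S^{-1}R$.

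Finally I would prove the equality $S^{-1}R_\c=S^{-1}R$ by working inside $B$. The inclusion $S^{-1}R_\c\subseteq S^{-1}R$ is immediate from $R_\c\subseteq R$. For the reverse, note that $S^{-1}R_\c$ contains $\mc(x)=S^{-1}\mc[x,x^{-1}]$ and $y=\phi(F)$; moreover $g(x)\in S$ is invertible in $S^{-1}R_\c$ and $\phi(E)=g(x)y^{-1}\in R_\c$, so $y^{-1}=g(x)^{-1}\phi(E)\in S^{-1}R_\c$. Hence $S^{-1}R_\c\supseteq\mc(x)[y,y^{-1}]=B=S^{-1}R$, which together with the reverse inclusion gives the desired equality.
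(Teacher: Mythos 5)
Your proof is correct and follows the same route as the paper: verify the left Ore condition for $S$ in $R_\c$ by reusing the witness from Lemma \ref{Ore}, then obtain $y^{-1}=g(x)^{-1}\phi(E)\in S^{-1}R_\c$ to force $S^{-1}R_\c=\mc(x)[y,y^{-1}]=S^{-1}R$. The only difference is that you make explicit the point the paper's ``similar to Lemma \ref{Ore}'' glosses over, namely that $R_\c$ is graded by $y$-degree with each homogeneous component an ideal of $\mc[x,x^{-1}]$, which is exactly what guarantees the Ore witness $b$ lands back in $R_\c$; this is a worthwhile addition rather than a departure.
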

\begin{proof}
Similar to Lemma \ref{Ore}, $S$ satisfies the left Ore condition in $R_\c$. Then $S^{-1}R_\c$ exists. Since
\beqs y^{-1}=\frac{(q-q^{-1})^2}{ (q-q^{-1})^2\c-(q^{-1}K+qK^{-1}) }\phi(E)\in S^{-1}R_\c,\eeqs
it is easy to see that $S^{-1}R_\c=\mc(x)[y, y^{-1}]=S^{-1}R$.
\end{proof}
As same as in Section 3, let $B=S^{-1}R$.

\begin{theo}
Let $V$ be a simple module over $U_\c$. Then one of the following
holds:

(i) $V$ is a lowest weight module;

(ii) $V$ is a highest weight module;

(iii) $V$ is a simple module of intermediate series;

(iv) There exists an $\alpha\in R_\c$ such that $\alpha$ is
irreducible in $B$ and $V\cong (R_c+B\alpha)/B\alpha$.
\end{theo}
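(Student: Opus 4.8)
The plan is to classify the simple $U_\c$-modules by pulling back the classification of simple $R$-modules through the injection $\phi\colon U_\c\hookrightarrow R$ established above, and then matching up the resulting pieces with the familiar representation-theoretic types. First I would observe that a simple $U_\c$-module $V$ carries an action of $F$ (namely $y$) and of $K^{\pm1}$ (namely $x^{\pm1}$), so the same $S$-torsion dichotomy used for $R$-modules applies verbatim to $V$: either $V$ is $S$-torsion, meaning some nonzero vector is a $K$-eigenvector, or $V$ is $S$-torsion-free. The torsion-free case is the easy one—I would invoke Lemma \ref{Soc} directly (with $R$ replaced by $R_\c$, whose localization $S^{-1}R_\c=B$ agrees with $S^{-1}R$ by the preceding lemma) to produce an irreducible $\alpha\in R_\c$ and the isomorphism $V\cong(R_\c+B\alpha)/B\alpha$, yielding case (iv).

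The bulk of the argument is the $S$-torsion case, where one must show it splits into the three weight-type families (i)--(iii). Here I would fix a nonzero $K$-eigenvector $v$ with $Kv=\lambda v$, $\lambda\in\mc^*$. The commutation relations $KEK^{-1}=q^2E$ and $KFK^{-1}=q^{-2}F$ show that $E$ and $F$ shift the $K$-eigenvalue by $q^{\pm2}$, so $V$ decomposes into a weight-space chain $\bigoplus_n V_{q^{2n}\lambda}$ under repeated application of $E$ and $F$. I would then analyze whether this chain terminates. If $F$ annihilates some weight vector, $V$ is a lowest weight module (case (i)); if $E$ annihilates some weight vector, $V$ is a highest weight module (case (ii)); and if neither $E$ nor $F$ ever kills a weight vector, then every weight space is one-dimensional and the action is given by scalars on a two-sided-infinite chain, which is exactly a simple module of the intermediate series (case (iii)). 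The key computation driving this trichotomy is the eigenvalue of $Z_q=\c$ acting on a weight vector: writing $Kv=\lambda v$, the relation $EF=\c-\frac{1}{(q-q^{-1})^2}(q^{-1}K+qK^{-1})$ forces $EF$ to act by the scalar $\c-\frac{1}{(q-q^{-1})^2}(q^{-1}\lambda+q\lambda^{-1})$, and similarly $FE$ acts by a shifted scalar; a weight vector is killed by $F$ (resp.\ $E$) precisely when the relevant scalar vanishes, which happens for at most one value of $\lambda$ in the chain because $q$ is generic.

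The main obstacle I anticipate is not the torsion-free case but the bookkeeping that guarantees the three torsion subcases are genuinely disjoint and exhaustive, and in particular that case (iii) really consists of simple modules rather than merely indecomposable ones. Once a weight chain is bi-infinite with no annihilation, I must check that simplicity of $V$ forces every weight space to be one-dimensional: if some $V_{\mu}$ had dimension $\geq 2$, the $S$-torsion-free analysis cannot apply, so I would instead use that $V$ as an $R$-module pulled back is simple and invoke the structure of $M_\lambda$-type modules from Section 3—concretely, a bi-infinite simple $S$-torsion module over $R$ is one-dimensional in each $x$-eigenspace, and this property transfers to $U_\c$ because $K=x$ generates the same torsion filtration. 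The delicate point is that $\phi(U_\c)=R_\c$ is a proper subalgebra of $R$, so I cannot directly cite the $R$-module classification for the torsion vectors; instead I must verify by hand that the scalars by which $E$ and $F$ act on the weight chain are determined, up to the finitely many boundary possibilities recorded above, entirely by $\lambda$ and $\c$, which is precisely the generic-$q$ computation that makes the intermediate-series modules well-defined and simple.
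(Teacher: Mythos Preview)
Your approach is essentially the paper's: split on whether $V$ is $S$-torsion, apply Lemma~\ref{Soc} to $R_\c$ (using $S^{-1}R_\c=B$) in the torsion-free case to get (iv), and observe in the torsion case that a $K$-eigenvector makes $V$ a simple weight module, hence of type (i)--(iii). The paper simply cites the weight-module trichotomy as known rather than reproving it, so the ``main obstacle'' you anticipate---bookkeeping for the weight chain and one-dimensionality of weight spaces---is not addressed there at all; in any case that worry dissolves once you note that $EF$ and $FE$ act as scalars on each weight space (since $K$ and $Z_q$ do), so the cyclic submodule generated by any weight vector already has one-dimensional weight spaces and equals $V$ by simplicity.
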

\begin{proof}
If $V$ is $S$-torsion, then there exits a $\lambda\in\mc^*$ and
$v\in V$ such that $K^{\pm1}\cdot v=\lambda^{\pm1}v$. So $V$ is a
simple weight module, and it is known that $V$ is a lowest weight
module, a highest weight module or a simple module of intermediate
series.

Next assume that $V$ is $S$-torsion-free. By  Lemma \ref{Soc}, there
exists an $\alpha\in R_\c$ such that $\alpha$ is irreducible in $B$
and \beqs V\cong Soc_{R_\c}B/B\alpha=(R_\c+B\alpha)/B\alpha.\eeqs
\end{proof}

\section{Examples of new simple $U_q(\sla)$-modules}

For all $f(x), g(x)\in S$, the polynomial $\alpha=f(x)y-g(x)$ is irreducible in $B$. Let $V=(R_\c+B\alpha)/B\alpha$. Then there exists
$v\in V$ such that
\beqs
Fv&=&\frac{g(K)}{f(K)}v,\\
Ev&=&\left(\c-\frac{q^{-1}K+qK^{-1}}{(q-q^{-1})^2}\right)\frac{f(q^{-2}K)}{g(q^{-2}K)}v.
\eeqs

In particular, if $\frac{g(K)}{f(K)}\in\mc$ or $\left(\c-\frac{q^{-1}K+qK^{-1}}{(q-q^{-1})^2}\right)\frac{f(q^{-2}K)}{g(q^{-2}K)}\in\mc$,
$V$ is called a Whittaker module, which is the $q$-analogue of Whittaker module of Lie algebra $\sla$.

\begin{prop}\label{polynomial}
If $V$ is of rank one as free $\mc[K, K^{-1}]$-module, then one of the following holds:

(i) $F{\bf1}=\mu K^n{\bf1}$ and $E{\bf1}=\frac1\mu\left(\c-\frac{q^{-1}K+qK^{-1}}{(q-q^{-1})^2}\right)K^{-n}{\bf1}$;

(ii) $E{\bf1}=\mu K^n{\bf1}$ and $F{\bf1}=\frac1\mu\left(\c-\frac{qK+q^{-1}K^{-1}}{(q-q^{-1})^2}\right)q^{-2n}K^{-n}{\bf1}$;

(iii) $E{\bf1}=\mu K^n(q^{-1}K-x_1){\bf1}$ and $F{\bf1}=\frac{1}{\mu(q-q^{-1})^2}(1-q^{-1}x_2K^{-1})q^{-2n}K^{-n}{\bf1}$.

{\noindent}Where $\mu\in\mc^*$, $n\in\mz$ and $x_1, x_2$ are solutions of $q^{-1}x+qx^{-1}-(q-q^{-1})^2\c=0$.
\end{prop}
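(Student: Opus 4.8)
The plan is to turn the entire statement into a single factorisation problem in the Laurent polynomial ring $\mc[K,K^{-1}]$. Since $V$ is free of rank one, I fix a free generator $\mathbf 1$, so that $V=\bigoplus_{m\in\mz}\mc K^m\mathbf 1$ and the element $K\in U_\c$ acts as multiplication by $K$. Because $F\mathbf 1,E\mathbf 1\in V$, there are unique Laurent polynomials $a,b\in\mc[K,K^{-1}]$ with
\[
F\mathbf 1=a(K)\mathbf 1,\qquad E\mathbf 1=b(K)\mathbf 1 .
\]
The relations $KFK^{-1}=q^{-2}F$ and $KEK^{-1}=q^2E$ give $FK^m=q^{2m}K^mF$ and $EK^m=q^{-2m}K^mE$, so the two operators are already determined on all of $V$; in particular $Ea(K)=a(q^{-2}K)E$ as operators on $V$.

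The next step is to feed in the value of the Casimir. The condition $Z_q=\c$ is equivalent to
\[
EF=\c-\frac{q^{-1}K+qK^{-1}}{(q-q^{-1})^2}=:P(K).
\]
Applying this to $\mathbf 1$ and commuting $E$ past $a(K)$ yields the key identity $a(q^{-2}K)\,b(K)=P(K)$. The companion relation coming from $FE\mathbf 1=\bigl(\c-\tfrac{qK+q^{-1}K^{-1}}{(q-q^{-1})^2}\bigr)\mathbf 1$ is obtained from it by the substitution $K\mapsto q^2K$, so it is automatic and the single identity above carries all the information; I would record this reduction explicitly so that no further representation-theoretic input is required.

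The classification then becomes a factorisation problem in $\mc[K,K^{-1}]$, which is a principal ideal domain whose units are exactly the monomials $\mu K^n$ ($\mu\in\mc^*$, $n\in\mz$) and whose irreducibles are the $K-\lambda$ with $\lambda\neq0$. Multiplying $P$ by $K$ I get
\[
KP(K)=-\frac{q^{-1}}{(q-q^{-1})^2}(K-x_1)(K-x_2),
\]
where $x_1,x_2$ are precisely the roots of $q^{-1}x+qx^{-1}-(q-q^{-1})^2\c=0$, so that $x_1x_2=q^2$ and $x_1+x_2=q(q-q^{-1})^2\c$. Hence, up to the unit $K^{-1}$ and a scalar, $P$ has exactly the two irreducible factors $K-x_1$ and $K-x_2$. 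In the identity $a(q^{-2}K)\,b(K)=P(K)$ these factors must be distributed between $a(q^{-2}K)$ and $b(K)$, with any surplus being a monomial $\mu K^n$. This leaves exactly three possibilities: both linear factors lie in $b$, so that $a$ (equivalently $a(q^{-2}K)$) is a monomial, giving case (i); both lie in $a(q^{-2}K)$, so that $b$ is a monomial, giving case (ii); one factor goes to each, giving case (iii). Passing from $a(q^{-2}K)$ back to $a(K)$ by $K\mapsto q^2K$ and reading the parameters $\mu,n$ off the chosen unit produces the stated formulas for $F\mathbf 1$ and $E\mathbf 1$.

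I expect the only real difficulty to be the bookkeeping in this last step: one must track the powers of $q$ created by the rescaling $K\mapsto q^2K$, and absorb the scalar $-q^{-1}/(q-q^{-1})^2$ together with the relations $x_1x_2=q^2$ and $x_1+x_2=q(q-q^{-1})^2\c$ into the normalisations, which is what converts the bare factorisation into the precise linear factors and the powers $q^{\pm2n}$ recorded in (i)--(iii). No simplicity hypothesis enters this direction, since the argument uses only that $V$ is a $U_\c$-module that is free of rank one over $\mc[K,K^{-1}]$; one would simply verify at the end that each of the three resulting pairs $(a,b)$ does solve the identity, so that the list is exhaustive.
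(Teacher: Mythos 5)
Your proposal is correct and takes essentially the same route as the paper: the paper's own proof likewise reduces everything to the single identity $f(K)g(q^{-2}K)=\c-\tfrac{q^{-1}K+qK^{-1}}{(q-q^{-1})^2}$ and then appeals to ``direct computations,'' which is exactly the unique-factorization analysis in $\mc[K,K^{-1}]$ (units $\mu K^n$, two linear irreducible factors $K-x_1$, $K-x_2$, three ways to distribute them) that you spell out. Your write-up is in fact more explicit than the paper's, and your observation that the $FE$ relation follows from the $EF$ relation by $K\mapsto q^2K$ is a correct and useful reduction.
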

\begin{proof}
Suppose that $E{\bf1}=f(K){\bf1}, F{\bf1}=g(K){\bf1}$, then $f(K)g(q^{-2}K)=\c-\frac{qK+q^{-1}K^{-1}}{(q-q^{-1})^2}$. By direct computations, this proposition holds.
\end{proof}

\begin{lemma}Suppose  $V=\mc[K, K^{-1}]{\bf 1}$ such that
$$E{\bf1}=\mu K^n(q^{-1}K-x_1){\bf1},\quad F{\bf1}=\frac{1}{\mu(q-q^{-1})^2}(1-q^{-1}x_2K^{-1})q^{-2n}K^{-n}{\bf1}.$$

(1) If $x_1=q^{-s+1}, x_2=q^{s+1}$ for some positive integer $s$, then $V$ is not a simple module.

(2) If $x_1=-q^{-s+1}, x_2=-q^{s+1}$  for some positive integer $s$, then $V$ is not a simple module.

In particular, in these cases we have ${\bf c}=\pm\frac{q^s+q^{-s}}{(q-q^{-1})^2}$.\end{lemma}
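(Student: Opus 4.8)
The plan is to reduce the question of simplicity to a divisibility problem in the Laurent polynomial ring $\mc[K,K^{-1}]$. Since $K^{\pm1}$ act on $V=\mc[K,K^{-1}]{\bf 1}$ by multiplication, every $U_{\bf c}$-submodule $W$ is in particular a $\mc[K,K^{-1}]$-submodule of the free rank-one module $V\cong\mc[K,K^{-1}]$; as $\mc[K,K^{-1}]$ is a principal ideal domain whose units are exactly the monomials $cK^m$, I may write $W=p(K)V$ for a polynomial $p(K)$ with $p(0)\neq0$, and $W$ is proper and nonzero precisely when $\deg p\geq1$. Writing $E{\bf 1}=f(K){\bf 1}$ and $F{\bf 1}=g(K){\bf 1}$ with $f,g$ as in the statement, the relations $KEK^{-1}=q^2E$ and $KFK^{-1}=q^{-2}F$ give $Ep(K)=p(q^{-2}K)E$ and $Fp(K)=p(q^2K)F$. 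Letting $h$ range over $\mc[K,K^{-1}]$ I then compute $E\bigl(p(K)h(K){\bf 1}\bigr)=p(q^{-2}K)h(q^{-2}K)f(K){\bf 1}$, and similarly for $F$, so that $W=p(K)V$ is a $U_{\bf c}$-submodule if and only if
\[ p(K)\mid p(q^{-2}K)f(K)\quad\text{and}\quad p(K)\mid p(q^{2}K)g(K)\ \text{ in }\ \mc[K,K^{-1}]. \]

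Next I would exhibit an explicit $p$. Up to units $f(K)$ has its only nonzero root at $K=qx_1$ and $g(K)$ has its only nonzero root at $K=q^{-1}x_2$; in case (1) these are $q^{-s+2}$ and $q^{s}$, and in case (2) they are $-q^{-s+2}$ and $-q^{s}$. The crucial observation is that in both cases these two values are the endpoints of a $q^2$-string of length $s$, since $x_2=q^{2s}x_1$. I therefore set $p(K)=\prod_{j=1}^{s}\bigl(K-\ep q^{-s+2j}\bigr)$ with $\ep=1$ in case (1) and $\ep=-1$ in case (2), a polynomial of degree $s\geq1$. Comparing nonzero roots: $p(q^{-2}K)f(K)$ has root set $\{\ep q^{-s+2},\ep q^{-s+4},\dots,\ep q^{s+2}\}$, which contains the roots $\{\ep q^{-s+2},\dots,\ep q^{s}\}$ of $p$; and $p(q^{2}K)g(K)$ has root set $\{\ep q^{-s},\ep q^{-s+2},\dots,\ep q^{s}\}$, which again contains the roots of $p$. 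For generic $q$ all these roots are simple, so both divisibilities hold, $W=p(K)V$ is a proper nonzero submodule, and hence $V$ is not simple. Finally the value of ${\bf c}$ is read off from the constraint $q^{-1}x_1+qx_1^{-1}=(q-q^{-1})^2{\bf c}$: substituting $x_1=\pm q^{-s+1}$ gives $q^{-1}x_1+qx_1^{-1}=\pm(q^{s}+q^{-s})$, whence ${\bf c}=\pm\frac{q^{s}+q^{-s}}{(q-q^{-1})^2}$.

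The only non-routine ingredient is the combination of the reduction in the first paragraph with the recognition that the nonzero roots of $f$ and $g$ bound a $q^2$-string of length exactly $s$; once this is seen, the two divisibilities become an immediate comparison of root multisets. The points requiring care are that units of $\mc[K,K^{-1}]$ are the monomials (so that $\deg p\geq1$ genuinely forces $W\neq V$) and that the roots at $K=0$ are to be discarded, since the monomial factors $K^{n}$, $K^{-n-1}$ in $f$ and $g$ are invertible; I expect these to be the main places where the argument must be stated precisely rather than any genuine difficulty.
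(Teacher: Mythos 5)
Your proof is correct, and at its core it does the same thing the paper does: exhibit a polynomial $p(K)$ such that $p(K)\mc[K,K^{-1}]{\bf 1}$ is a proper nonzero submodule. The execution, however, is genuinely different and worth contrasting. The paper writes down the generator through a coefficient recursion $a_j=-q^{-2}a_{j-1}\frac{q^{s-j}-q^{j-s}}{q^j-q^{-j}}$ and then verifies by direct computation that $E$ and $F$ send $f(K){\bf 1}$ into $\mc[K,K^{-1}]f(K){\bf 1}$; the mechanism that makes the recursion terminate is hidden in the algebra. You instead first prove a clean criterion — since $Ep(K)=p(q^{-2}K)E$ and $Fp(K)=p(q^{2}K)F$, the subspace $p(K)V$ is a submodule if and only if $p(K)\mid p(q^{-2}K)f(K)$ and $p(K)\mid p(q^{2}K)g(K)$ in $\mc[K,K^{-1}]$ — and then choose $p$ as the product of linear factors over the $q^{2}$-string joining the nonzero root $qx_1=\ep q^{-s+2}$ of $f$ to the nonzero root $q^{-1}x_2=\ep q^{s}$ of $g$, so that both divisibilities reduce to an inclusion of root sets (all roots simple because $q$ is generic). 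This buys transparency: it makes visible exactly why the hypothesis $x_2=q^{2s}x_1$ is what creates the submodule, it handles the unit issue ($K^{\pm1}$ factors and monomials) explicitly, and your PID remark even characterizes all $\mc[K,K^{-1}]$-stable subspaces, which is more than the lemma needs. The two constructions do not literally coincide (your $p$ has degree $s$, the paper's $f$ has degree at most $s-1$ since its recursion forces $a_s=0$), but either one suffices to conclude non-simplicity, and your computation of ${\bf c}=\pm\frac{q^{s}+q^{-s}}{(q-q^{-1})^{2}}$ from $q^{-1}x_1+qx_1^{-1}=(q-q^{-1})^{2}{\bf c}$ matches the paper's.
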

\begin{proof}
We only prove (1), the proof for (2) is similar.

Let $f(K)=\sum_{i=0}^sa_iK^i$ such that $a_0\not=0$ and $a_j=-q^{-2}a_{j-1}\frac{q^{s-j}-q^{j-s}}{q^j-q^{-j}}$ for all $j\geq 1$. Then we have
\beqs \frac{1}{\mu}K^{-n}q^{2s+1}Ef(K){\bf1}&=&(K-q^{s+2})f(K){\bf1},\\
 {\mu}(q-q^{-1})^2K^{n+1}q^{-2s}Ff(K){\bf1}&=&(K-q^{-s})f(K){\bf1}.\eeqs
Thus the subspace $\mc[K, K^{-1}]f(K)$ is a proper submodule over $U_q(\sla)$.
\end{proof}

By the following lemma, the proper submodule $\mc[K, K^{-1}]f(K)$ is simple.

\begin{lemma}
Let $V=\mc[K, K^{-1}]{\bf 1}$ the polynomial module over $U_q(\sla)$ such that
$$E{\bf1}=\mu K^n(q^{-1}K-x_1){\bf1},\quad F{\bf1}=\frac{1}{\mu(q-q^{-1})^2}(1-q^{-1}x_2K^{-1})q^{-2n}K^{-n}{\bf1}.$$
If $x_1\not=\pm q^{-s+1}$ for any positive integer $s$, then $V$ is a simple module.
\end{lemma}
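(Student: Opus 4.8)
The plan is to turn simplicity into a divisibility problem in $\mc[K,K^{-1}]$ and then settle it by counting roots along the orbits of the map $\gamma\mapsto q^2\gamma$. Since $K^{\pm1}$ act invertibly and $V=\mc[K,K^{-1}]{\bf 1}$ is free of rank one, any $U_q(\sla)$-submodule $W$ is first of all a $\mc[K,K^{-1}]$-submodule, hence an ideal of $\mc[K,K^{-1}]$. As $\mc[K,K^{-1}]$ is the localization of the principal ideal domain $\mc[K]$ and is itself a principal ideal domain, $W=\mc[K,K^{-1}]g(K){\bf 1}$ for a single $g$, which I may normalize to a polynomial with $g(0)\neq0$. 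Then $W=V$ precisely when $g$ is a unit, i.e. a monomial; so it suffices to prove that if $g$ has a nonzero root, then $x_1=\pm q^{1-s}$ for some positive integer $s$, contradicting the hypothesis.

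First I would compute the action on a general vector using $EK^m=q^{-2m}K^mE$ and $FK^m=q^{2m}K^mF$, which give $E(g(K){\bf 1})=\mu K^n(q^{-1}K-x_1)g(q^{-2}K){\bf 1}$ and $F(g(K){\bf 1})=c\,K^{-n}(1-q^{-1}x_2K^{-1})g(q^2K){\bf 1}$ for a nonzero scalar $c$. Demanding that $W$ be closed under $E$ and $F$ and cancelling the unit monomials yields the two divisibilities $g(K)\mid(K-qx_1)g(q^{-2}K)$ and $g(K)\mid(K-q^{-1}x_2)g(q^2K)$. Writing $m(\gamma)$ for the multiplicity of $\gamma$ as a root of $g$ and comparing multiplicities at each $\gamma\in\mc^*$, these say $m(\gamma)\le m(q^{-2}\gamma)$ (with the right side increased by $1$ when $\gamma=qx_1$) and $m(\gamma)\le m(q^2\gamma)$ (with the right side increased by $1$ when $\gamma=q^{-1}x_2$).

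Next I would analyze these inequalities one orbit at a time under $\gamma\mapsto q^2\gamma$. On an orbit meeting neither $qx_1$ nor $q^{-1}x_2$ the two inequalities make the multiplicity sequence simultaneously non-increasing and non-decreasing along the orbit, so its finite support forces it to vanish; the same telescoping argument annihilates an orbit that meets only one of the two special points. Hence a nonzero root can occur only on an orbit containing both $qx_1$ and $q^{-1}x_2$. Using that $x_1x_2=q^2$ (the product of the roots of $q^{-1}x+qx^{-1}=(q-q^{-1})^2{\bf c}$), this common-orbit condition forces $x_1=\pm q^{k_1-k_2}$, where $k_1,k_2$ are the positions of $qx_1,q^{-1}x_2$ in the orbit.

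The crux, and the step I expect to be the main obstacle, is the directional analysis on this remaining orbit, where the asymmetry between the raising datum $qx_1$ and the lowering datum $q^{-1}x_2$ is decisive. Confining the support to the interval between $k_1$ and $k_2$ and pushing the two inequalities inward from both ends, I would show that a nonzero solution exists exactly when $k_1\le k_2$; in that case $x_1=\pm q^{-(k_2-k_1)}=\pm q^{1-s}$ with $s=k_2-k_1+1\ge1$, a forbidden value, while $k_1>k_2$ forces every multiplicity to be $0$. Therefore a nonzero proper submodule would compel $x_1=\pm q^{1-s}$, which the hypothesis excludes; hence $V$ is simple. As a check, the forbidden values recover precisely the proper submodule $\mc[K,K^{-1}]f(K)$ exhibited in the previous lemma.
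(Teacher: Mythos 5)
Your argument is correct, but it takes a genuinely different route from the paper's. You exploit the structural fact that any $U_q(\sla)$-submodule $W$ of $V$ is in particular a $\mc[K,K^{-1}]$-submodule of a free rank-one module, hence a principal ideal $(g)$ of the Laurent polynomial ring, and you convert closure under $E$ and $F$ into the two divisibilities $g(K)\mid (K-qx_1)\,g(q^{-2}K)$ and $g(K)\mid (K-q^{-1}x_2)\,g(q^{2}K)$; the orbit-by-orbit root-multiplicity count (which I have checked: the telescoping does kill every orbit except one containing $qx_1$ and $q^{-1}x_2$ in the correct order) then shows, via $x_1x_2=q^2$, that a non-unit $g$ can exist only when $x_1=\pm q^{1-s}$ for some positive integer $s$. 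The paper instead argues by induction on the width $s$ of the support of an arbitrary nonzero Laurent polynomial $f(K)$ lying in a submodule: it applies the specific combinations $q^{2s+1}E-K$ and $q^{-2s}(q-q^{-1})^2KF-K$, each of which strictly shrinks the support unless it acts as a scalar on $f(K)$, and shows that the degenerate case where both act as scalars forces $x_2=x_1q^{2s}$ and hence the excluded values $x_1=\pm q^{-s+1}$. The two proofs turn on the same numerical coincidence, but yours replaces the paper's coefficient-level case analysis with a divisibility/valuation argument in a PID; as a bonus it describes all proper submodules (through their possible generators $g$) in the excluded cases, recovering the submodule $\mc[K,K^{-1}]f(K)$ exhibited in Lemma 5.2, whereas the paper's computation is more elementary and self-contained.
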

\begin{proof}
We may assume that $n=0$ and $\mu=1$. The proof for general case is very similar.
Then
$E{\bf1}=(q^{-1}K-x_1){\bf1}$ and $F{\bf1}=\frac{1}{(q-q^{-1})^2}(1-q^{-1}x_2K^{-1}){\bf1}$.
By Proposition \ref{polynomial}, $x_1\cdot x_2=q^2$.

For an arbitrary $f(K)=\sum_{i=r}^sa_iK^i\in\mc[K, K^{-1}]$ such that $a_r\cdot a_s\not=0$, we may assume $r=0$
with replacing $f(K)$ by $K^{-r}f(K)$. So $s\geq0$. If $s=0$, then we obtain ${\bf1}$ by multiplying $a_s^{-1}$. Next assume $s>0$. We have
\beqs &&(q^{2s+1}E-K)f(K)\\
&=&(a_{s-1}(q^2-1)-a_sqx_1)K^s\\
&&+(a_{s-2}(q^4-1)-a_{s-1}q^3x_1)K^{s-1}\\
&&\cdots\\
&&+(a_{0}(q^{2s}-1)-a_{1}q^{2s-1}x_1)K\\
&&-a_0q^{2s+1}x_1,
\eeqs
and \beqs &&(q^{-2s}(q-q^{-1})^2KF-K)f(K)\\
&=&(a_{s-1}(q^{-2}-1)-a_sq^{-1}x_2)K^s\\
&&+(a_{s-2}(q^{-4}-1)-a_{s-1}q^{-3}x_2)K^{s-1}\\
&&\cdots\\
&&+(a_{0}(q^{-2s}-1)-a_{1}q^{-2s+1}x_2)K\\
&&-a_0q^{-2s-1}x_2.
\eeqs

{\it Case 1. } $(q^{2s+1}E-K)f(K)$ is not a scalar of $f(K)$. Then
$$g(K):=(a_{s-1}(q^2-1)-a_sqx_1)f(K)-a_s(q^{2s+1}E-K)f(K)\not=0$$
and $g(K)=\sum_{i=0}^{s-1}b_iK^i$ for some constants $b_i\in\mc$.

{\it Case 2. } $(q^{-2s}(q-q^{-1})^2KF-K)f(K)$ is not a scalar of $f(K)$. Similar to Case 1.

{\it Case 3. } Both $(q^{2s+1}E-K)f(K)$ and $(q^{-2s}(q-q^{-1})^2KF-K)f(K)$ are scalars of $f(K)$.

Note that $a_0q^{2s+1}x_1\not=0, a_0q^{-2s-1}x_2\not=0$. By
\beqs a_{0}(q^{2s}-1)-a_{1}q^{2s-1}x_1=-a_1q^{2s+1}x_1,\\
a_{0}(q^{-2s}-1)-a_{1}q^{-2s+1}x_2=-a_1q^{-2s-1}x_2,
\eeqs
we have $a_1\not=0$ and $x_2=x_1q^{2s}$. Hence $x_2=\pm q^{s+1}$ and $x_1=\pm q^{-s+1}$. Contradiction to assumption.

Induction on $s$, we may obtain $s=0$ and then $V$ is a simple module.
\end{proof}

\begin{prop}\label{poly-simp}
Suppose that $V$ is a simple $U_q(\sla)$-module and it is of rank one as free $\mc[K, K^{-1}]$-module. Then one of the following holds:

(i) $F{\bf1}=\mu K^n{\bf1}$ and $E{\bf1}=\frac1\mu\left(\c-\frac{q^{-1}K+qK^{-1}}{(q-q^{-1})^2}\right)K^{-n}{\bf1}$;

(ii) $E{\bf1}=\mu K^n{\bf1}$ and $F{\bf1}=\frac1\mu\left(\c-\frac{qK+q^{-1}K^{-1}}{(q-q^{-1})^2}\right)q^{-2n}K^{-n}{\bf1}$;

(iii) $E{\bf1}=\mu K^n(q^{-1}K-x_1){\bf1}$ and $F{\bf1}=\frac{1}{\mu(q-q^{-1})^2}(1-q^{-1}x_2K^{-1})q^{-2n}K^{-n}{\bf1}$.

{\noindent}Where $\mu\in\mc^*$, $n\in\mz$, $x_1, x_2$ are solutions of $q^{-1}x+qx^{-1}-(q-q^{-1})^2\c=0$ and $x_1\not=\pm q^{-s+1}$ for all positive integer $s$.
\end{prop}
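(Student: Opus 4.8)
The plan is to obtain Proposition \ref{poly-simp} by feeding the simplicity hypothesis into the classification already established in Proposition \ref{polynomial}. Since $V$ is of rank one as a free $\mc[K, K^{-1}]$-module, Proposition \ref{polynomial} applies directly and places $V$ in one of the three families (i), (ii), (iii), with $x_1, x_2$ in case (iii) being the two roots of $q^{-1}x + qx^{-1} - (q-q^{-1})^2\c = 0$. Thus the explicit forms of $E{\bf 1}$ and $F{\bf 1}$ in all three cases are inherited for free, and the only new content to verify is the extra constraint $x_1 \neq \pm q^{-s+1}$ attached to case (iii).

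For that constraint I would argue by contraposition, invoking the first of the two lemmas preceding this proposition. Suppose $V$ lies in case (iii) with $x_1 = \pm q^{-s+1}$ for some positive integer $s$. Rewriting the defining relation as the quadratic $q^{-1}x^2 - (q-q^{-1})^2\c\, x + q = 0$, the product of its roots is $q/q^{-1} = q^2$, so $x_1 x_2 = q^2$ and therefore $x_2 = q^2/x_1 = \pm q^{s+1}$ with the same sign as $x_1$. This is exactly the hypothesis of that lemma, whose parts (1) and (2) cover the two sign choices and exhibit the proper nonzero submodule $\mc[K, K^{-1}]f(K)$ for the explicit polynomial $f(K) = \sum_{i=0}^s a_i K^i$ constructed there. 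The existence of this submodule contradicts the simplicity of $V$, forcing $x_1 \neq \pm q^{-s+1}$ for every positive integer $s$, which is the assertion.

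This completes the necessity statement the proposition makes; it is worth recording that the condition is sharp, since the second preceding lemma shows that whenever $x_1 \neq \pm q^{-s+1}$ for all positive $s$, the module in case (iii) is genuinely simple, and no further restriction on the remaining data $\mu \in \mc^*$ and $n \in \mz$ is required. I do not anticipate a real obstacle here, as every ingredient is already available; the only point needing care is the bookkeeping that ties $x_1 = \pm q^{-s+1}$ to $x_2 = \pm q^{s+1}$ with matching sign, and this is immediate from $x_1 x_2 = q^2$.
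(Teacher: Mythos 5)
Your proposal is correct and follows essentially the same route as the paper: Proposition \ref{polynomial} supplies the three normal forms, and the two preceding lemmas (via $x_1x_2=q^2$) handle the exclusion $x_1\not=\pm q^{-s+1}$ in case (iii) together with its sharpness. The only divergence is one of emphasis --- you read the statement as pure necessity and apply Lemma 5.2 contrapositively, while the paper's terse proof also records the sufficiency checks that cases (i) and (ii) are always simple --- but this does not affect the validity of what you prove.
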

\begin{proof}

For case (i), we have $\frac{1}{\mu}K^{-n}F\cdot\phi(K)=\phi(q^2K)$, then it is easy to know the module is simple.

For case (ii), the proof is similar.

Case (iii) can be obtained by Lemmas 5.2-5.3.
\end{proof}

Note that the Whittaker modules are those modules (i) and (ii) in Proposition \ref{poly-simp} with $n=0$. Consequently, we have the following corollary.
\begin{coro}
The Whittaker modules over $U_q(\sla)$ are simple.
\end{coro}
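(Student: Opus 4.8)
The plan is to recognize that this corollary is an immediate specialization of Proposition~\ref{poly-simp}. By the note preceding the statement, the Whittaker modules are precisely the rank-one free $\mc[K,K^{-1}]$-modules of types (i) and (ii) with $n=0$, and the proof of Proposition~\ref{poly-simp} already establishes that every module of type (i) or (ii) is simple, with no restriction on $n$ (only case (iii), governed by Lemmas 5.2--5.3, carries the constraint $x_1\neq\pm q^{-s+1}$). Hence the simplicity of the Whittaker modules follows at once. To make the argument self-contained, I would record the proof for the two Whittaker cases directly, mirroring the Vandermonde argument used in Section~3 to prove $M_\lambda$ simple.

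Concretely, for a type (i) Whittaker module $V=\mc[K,K^{-1}]{\bf1}$ with $F{\bf1}=\mu{\bf1}$, I would first rewrite the relation $KFK^{-1}=q^{-2}F$ as $FK^{i}=q^{2i}K^{i}F$, and compute
\[
F\cdot K^{i}{\bf1}=q^{2i}K^{i}F{\bf1}=\mu\,q^{2i}K^{i}{\bf1}\qquad(i\in\mz).
\]
Thus $\tfrac1\mu F$ acts diagonally on the basis $\{K^{i}{\bf1}\}_{i\in\mz}$ with eigenvalue $q^{2i}$ on $K^{i}{\bf1}$, and since $q$ is generic these eigenvalues are pairwise distinct. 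Then, to prove simplicity, I would take a nonzero submodule $W$ and a nonzero element $w=\sum_{i=r}^{s}a_iK^{i}{\bf1}\in W$ with $a_ra_s\neq0$; applying the powers $(\tfrac1\mu F)^{t}$ for $t=0,\dots,s-r$ yields the vectors $\sum_{i=r}^{s}a_i q^{2it}K^{i}{\bf1}$, and invertibility of the Vandermonde matrix $(q^{2it})_{r\le i\le s,\,0\le t\le s-r}$ in the distinct nodes $q^{2i}$ forces each $a_iK^{i}{\bf1}\in W$. In particular $K^{r}{\bf1}\in W$, and applying $K^{\pm1}$ repeatedly shows $K^{j}{\bf1}\in W$ for all $j\in\mz$, so $W=V$.

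The type (ii) Whittaker module is handled symmetrically: from $E{\bf1}=\mu{\bf1}$ and $KEK^{-1}=q^{2}E$, equivalently $EK^{i}=q^{-2i}K^{i}E$, one obtains $E\cdot K^{i}{\bf1}=\mu\,q^{-2i}K^{i}{\bf1}$, and the identical Vandermonde-and-sweep-out argument applies with $E$ in place of $F$ and nodes $q^{-2i}$. I do not expect any genuine obstacle here, since all the substantive work lives in Proposition~\ref{poly-simp} and Lemmas 5.2--5.3, and this corollary is merely the specialization $n=0$ of the simple cases (i) and (ii). The only points requiring care are the commutation identity $FK^{i}=q^{2i}K^{i}F$ (resp.\ $EK^{i}=q^{-2i}K^{i}E$) and the genericity of $q$, which is exactly what guarantees the distinctness of the eigenvalues and hence the invertibility of the relevant Vandermonde matrix.
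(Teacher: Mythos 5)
Your proposal is correct and follows essentially the same route as the paper: the corollary is obtained by specializing cases (i) and (ii) of Proposition~\ref{poly-simp} to $n=0$. Your explicit Vandermonde argument merely fills in the step the paper dismisses with ``it is easy to know the module is simple'' (there the operator $\frac{1}{\mu}K^{-n}F$ is observed to act by $g(K)\mapsto g(q^2K)$, which is exactly your diagonal action on the monomials $K^i{\bf 1}$), and it mirrors the paper's own proof of simplicity of $M_\lambda$ in Section~3.
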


\def\refname{\centerline{\bf REFERENCES}}

\end{document}